\newtheorem{theorem}{\sc Theorem}
\newtheorem{lemma}[theorem]{\sc Lemma}
\newtheorem{corollary}[theorem]{\sc Corollary}
\date{}
\title{On rational and concise words}
\author{Robert Guralnick}
\address{Department of Mathematics, University of Southern California,
Los Angeles, CA 90089-1113, USA}
\email{guralnic@math.usc.edu}
\author{Pavel Shumyatsky }
\address{ Department of Mathematics, University of Brasilia,
Brasilia-DF, 70910-900 Brazil }
\email{pavel@unb.br}
\thanks{The first author was partially supported by the NSF grant DMS-1302886; the second author
was supported by CNPq-Brazil}
\keywords{words, commutators}
\subjclass[2010]{20F10, 20E26}
\begin{document}
\begin{abstract} A group-word $w$ is called concise if whenever the set of $w$-values in a group $G$ is finite it always follows that the verbal subgroup $w(G)$ is finite. More generally, a word $w$ is said to be concise in a class of groups $X$ if whenever the set of $w$-values  is finite for a group $G\in X$, it always follows that $w(G)$ is finite. P. Hall asked whether every word is concise. Due to Ivanov the answer to this problem is known to be negative. It is still an open problem whether every word is concise in the class of residually finite groups. A word $w$ is rational if the number of solutions
to the equation $w(x_1,\dots,x_k)=g$ is the same as the number of solutions to $w(x_1,\dots,x_k)=g^e$ for every finite group $G$ and for every $e$ relatively prime to $|G|$. We observe that any rational word is concise in the class of residually finite groups. Further we give a sufficient condition for rationality of a word. As a corollary we deduce that the word $w=[\dots[x_1^{n_1},x_2]^{n_2},\dots,x_k]^{n_k}$ is concise in the class of residually finite groups.
\end{abstract}
\maketitle

Let $w=w(x_1,\dots,x_k)$ be a group-word, and let $G$ be a group. The verbal subgroup $w(G)$ of $G$ determined by $w$ is the subgroup generated by the set $G_w$ consisting of all values $w(g_1,\ldots,g_k)$, where $g_1,\ldots,g_k$ are elements of $G$.  A word $w$ is said to be concise if whenever $G_w$ is finite for a group $G$, it always follows that $w(G)$ is finite. More generally, a word $w$ is said to be concise in a class of groups $X$ if whenever $G_w$ is finite for a group $G\in X$, it always follows that $w(G)$ is finite. P. Hall asked whether every word is concise, but  later Ivanov proved that this problem has a negative solution in its general form \cite{ivanov} (see also \cite[p.\ 439]{ols}). On the other hand, many relevant words are known to be concise. For instance, it was shown in \cite{jwilson} that the multilinear commutator words are concise. Such words are also known under the name of outer commutator words and are precisely the words that can be written in the form of multilinear Lie monomials. Merzlyakov showed that every word is concise in the class of linear groups \cite{merzlyakov} while Turner-Smith proved that every word is concise in the class of  residually finite groups all of whose quotients are again residually finite \cite{TS2}. There is an open problem whether every word is concise in the class of residually finite groups (cf. Segal \cite[p.\ 15]{Se} or A. Jaikin-Zapirain \cite{jaikin}). It was shown in \cite{as} that if $w$ is a multilinear commutator word and  $n$ is a prime-power, then the word $w^n$ is concise in the class of residually finite groups.

We say that a word $w$ is boundedly concise in a class of groups $X$ if for every integer $m$ there exists a number $\nu=\nu(X,w,m)$ such that whenever $|G_w|\leq m$ for a group $G\in X$ it always follows that $|w(G)|\leq\nu$. Fern\'andez-Alcober and Morigi \cite{fernandez-morigi} showed that every word which is concise in the class of all groups is actually boundedly concise. Moreover they showed that whenever $w$ is a multilinear commutator word having at most $m$ values in a group $G$, one has $|w(G)|\leq (m-1)^{(m-1)}$.
It was shown in \cite{as} that if $w=\gamma_k$ is the $k$th lower central word and $n$ a prime-power, then the word $w^n$ is boundedly concise in the class of residually finite groups. Recall that the word $\gamma_{k}$ is defined inductively by the formulae
\[
\gamma_1=x_1,
\qquad
\gamma_k=[\gamma_{k-1},x_k]=[x_1,\ldots,x_k],
\quad
\text{for $k\ge 2$.}
\]
The corresponding verbal subgroup $\gamma_k(G)$ is the familiar $k$th term of the lower central series of $G$.  

The present article grew out of the observation that if $w$ is a weakly rational word and $G$ is a residually finite group in which $w$ has at most $m$ values, then the order of $w(G)$ is $m$-bounded.

We say that $w$ is {\it weakly rational} if $g\in G_w$ if and only if $g^e \in G_w$ for every finite group $G$ and for every $e$ relatively prime to $|G|$.  We say that $w$ is {\it rational} if the number of solutions
to the equation $w(x_1,\dots,x_k)=g$ is the same as the number of solutions to $w(x_1,\dots,x_k)=g^e$.  Clearly rational implies weakly rational.

\begin{lemma}\label{e1}
The word $w$ is weakly rational if and only if $g^e\in G_w$ for every finite group $G$, every $g \in G_w$, and every $e$ relatively prime to $|g|$. The word $w$ is rational if and only if the number of solutions
to the equation $w(x_1,\dots,x_k)=g$ is the same as that to $w(x_1,\dots,x_k)=g^e$.
\end{lemma}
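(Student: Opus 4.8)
The plan is to prove the two assertions in parallel, since each comes down to one elementary number-theoretic fact: if $n$ and $N$ are positive integers with $n\mid N$, then the reduction homomorphism $(\mathbb{Z}/N\mathbb{Z})^{\times}\to(\mathbb{Z}/n\mathbb{Z})^{\times}$ is surjective; equivalently, given $e$ with $\gcd(e,n)=1$ there is a positive integer $e'$ with $e'\equiv e\pmod n$ and $\gcd(e',N)=1$. In both statements one implication is free: whenever $\gcd(e,|G|)=1$ one also has $\gcd(e,|g|)=1$, because $|g|$ divides $|G|$; hence the condition phrased in terms of $|g|$ is formally stronger than the defining condition phrased in terms of $|G|$, and the ``if'' directions require nothing further.

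For the nontrivial direction of the first statement, assume $w$ is weakly rational and let $G$ be finite, $g\in G_w$, and $e$ coprime to $|g|$. Choose $e'>0$ with $e'\equiv e\pmod{|g|}$ and $\gcd(e',|G|)=1$. Since $g\in G_w$ and $\gcd(e',|G|)=1$, weak rationality gives $g^{e'}\in G_w$; and $e'\equiv e\pmod{|g|}$ forces $g^{e'}=g^{e}$, so $g^{e}\in G_w$. For the reverse implication — checking that the one-sided condition in the statement already entails the full two-sided definition of weak rationality — let $G$ be finite and $\gcd(e,|G|)=1$. If $g\in G_w$, then $\gcd(e,|g|)=1$ and $g^{e}\in G_w$ by hypothesis. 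If $g^{e}\in G_w$, set $h=g^{e}$; since $\gcd(e,|g|)=1$ we have $|h|=|g|$, and choosing $f>0$ with $ef\equiv 1\pmod{|G|}$ we get $\gcd(f,|h|)=1$ and hence $h^{f}\in G_w$; but $ef\equiv 1\pmod{|g|}$ because $|g|$ divides $|G|$, so $h^{f}=g^{ef}=g\in G_w$.

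The second statement is handled identically, with the number of solutions replacing membership in $G_w$. Write $N_w(y)$ for the number of $k$-tuples in $G$ on which $w$ takes the value $y$. If $N_w(g)=N_w(g^{e})$ for all finite $G$, all $g\in G$ and all $e$ coprime to $|g|$, then in particular for all $e$ coprime to $|G|$, so $w$ is rational. Conversely, if $w$ is rational and $G$, $g\in G$, $e$ are given with $\gcd(e,|g|)=1$, pick $e'>0$ with $e'\equiv e\pmod{|g|}$ and $\gcd(e',|G|)=1$; then $N_w(g^{e})=N_w(g^{e'})=N_w(g)$, the last equality by rationality. This gives both equivalences.

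The proof has no genuine obstacle: the only substantive input is the surjectivity of $(\mathbb{Z}/N\mathbb{Z})^{\times}\to(\mathbb{Z}/n\mathbb{Z})^{\times}$ for $n\mid N$, together with the trivial facts that $g^{e'}=g^{e}$ when $e'\equiv e\pmod{|g|}$ and that $|g^{e}|=|g|$ when $\gcd(e,|g|)=1$. The one point to stay alert to is the mismatch of moduli — the defining conditions constrain $e$ relative to $|G|$, while the conclusions we want speak of $|g|$ — so one must always lift the residue class of $e$ modulo $|g|$ to a residue coprime to the larger modulus $|G|$, and check that this is what the number-theoretic fact provides.
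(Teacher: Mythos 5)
Your proof is correct and follows essentially the same route as the paper: the key step in both is to lift $e$ from a unit modulo $|g|$ to a congruent integer coprime to $|G|$ (the paper does this explicitly via $e_1=e+d$ with $d$ the maximal divisor of $|G|$ coprime to $e$, whereas you invoke the surjectivity of $(\mathbb{Z}/N\mathbb{Z})^{\times}\to(\mathbb{Z}/n\mathbb{Z})^{\times}$). You are in fact somewhat more thorough than the paper, which declares the converse direction ``clear'' and omits the rational case as ``similar.''
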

\begin{proof} We will prove only the claim about weakly rational words as the other statement follows by a similar argument. It is clear that if $g^e \in G_w$ for every finite group $G$, every $g \in G_w$, and every $e$ relatively prime to $|g|$, then $w$ is weakly rational. Let us prove the converse. Assume that $w$ is weakly rational. Let $G$ be a finite group and $g \in G_w$. Given $e$ such that $(e,|g|)=1$, we need to show that $g^e\in G_w$.

Let $d$ be the maximal divisor of $|G|$ such that $(d,e)=1$. Set $e_1=e+d$. Since $|g|$ divides $d$, it follows that $g^e=g^{e_1}$. On the other hand, it is clear that $e_1$ is relatively prime to $|G|$. Hence, $g^e\in G_w$.
\end{proof}

\begin{lemma}\label{concise} Let $m$ be a positive integer and $w$ a weakly rational word. Let $G$ be a residually finite group with at most $m$ values of the word $w$  Then the order of $w(G)$ is $m$-bounded.
\end{lemma}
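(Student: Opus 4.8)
The plan is to reduce the statement to finite groups and then combine two facts: weak rationality forces every $w$-value to have $m$-bounded order, and $w(G)$ is central-by-(boundedly finite). For the reduction, suppose we knew that $|w(H)|\le\nu(m)$ for every \emph{finite} group $H$ with $|H_w|\le m$, for some function $\nu$. Let $G$ be residually finite with $|G_w|\le m$, and pick arbitrary distinct $y_0,\dots,y_r\in w(G)$. Choosing a normal subgroup $N$ of finite index in $G$ that separates $y_0,\dots,y_r$, we obtain a finite quotient $\bar G=G/N$ in which the $y_i$ have distinct images, so $|w(\bar G)|\ge r+1$, while $\bar G_w$ is exactly the image of $G_w$ and hence $|\bar G_w|\le m$. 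Thus $r+1\le\nu(m)$, and as $y_0,\dots,y_r$ were arbitrary, $|w(G)|\le\nu(m)$. So from now on $G$ is finite with $|G_w|\le m$; this is the only place where residual finiteness enters.

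Next I would bound the order of an individual $w$-value. Let $x\in G_w$. By Lemma~\ref{e1} and weak rationality, $x^e\in G_w$ for every $e$ coprime to $|x|$; equivalently, every generator of $\langle x\rangle$ lies in $G_w$. Hence $\varphi(|x|)\le|G_w|\le m$, and since only finitely many integers $n$ satisfy $\varphi(n)\le m$, there is a function $c(m)$, independent of $G$, with $|x|\le c(m)$ for all $x\in G_w$.

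To finish, note that $G_w$ is closed under conjugation and generates $w(G)$, so $w(G)$ acts by conjugation on the set $G_w$ with kernel $C_{w(G)}(G_w)=Z(w(G))$; therefore $[w(G):Z(w(G))]\le m!$. By a classical theorem of Schur in the quantitative form due to Wiegold, $|w(G)'|$ is then bounded by a function of $m!$, hence of $m$. On the other hand $w(G)/w(G)'$ is a finite abelian group generated by the at most $m$ images of the elements of $G_w$, each of order at most $c(m)$, so $|w(G)/w(G)'|\le c(m)^{m}$. Multiplying, $|w(G)|=|w(G)'|\cdot|w(G)/w(G)'|$ is $m$-bounded, which supplies the function $\nu(m)$ needed above.

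The genuinely essential step is the bound on the orders of $w$-values: weak rationality is used precisely to convert ``few $w$-values'' into ``$w$-values of bounded order''. Neither ingredient alone would suffice — a group generated by $m$ elements of $c(m)$-bounded order can be arbitrarily large (Burnside-type examples), and so can a central-by-(order $\le m!$) group (any finite abelian group) — so the main point to get right is that both the order bound on $w$-values and the bound on $[w(G):Z(w(G))]$ have to be extracted and then reconciled through the abelianization of $w(G)$ via Schur–Wiegold.
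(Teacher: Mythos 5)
Your proposal is correct and follows essentially the same route as the paper: reduce to finite $G$, use the conjugation action on the at most $m$ values to get $[w(G):Z(w(G))]\le m!$, apply Schur's theorem to bound $|w(G)'|$, and use weak rationality via Lemma~\ref{e1} to bound $\varphi(|g|)\le m$ and hence the orders of the generators of the abelian quotient. The only differences are cosmetic (you spell out the reduction to finite quotients, which the paper leaves as ``we can assume $G$ is finite,'' and you work with $w(G)/w(G)'$ rather than passing to $G/w(G)'$), so there is nothing to fix.
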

\begin{proof} We can assume that $G$ is finite. Let $W=w(G)$. The group $G$ acts on the set of $w$-values by conjugation and therefore $G/C_G(W)$ embeds in the symmetric group on $m$ symbols. It follows that the order of $W/Z(W)$ is at most $m!$. By Schur's theorem $W'$ has $m$-bounded order \cite[10.1.4]{rob}. Passing to $G/W'$ we can assume that $W$ is abelian. Let $g$ be a $w$-value. Lemma \ref{e1} shows that every element $h$ such that $\langle h\rangle=\langle g\rangle$ is a $w$-value. The number of $h$ with this property is $\phi(|g|)$, where $\phi$ denotes the Euler function. Thus, $\phi(|g|)\leq m$. Of course, this implies that $|g|$ is $m$-bounded. Hence, $W$ is an abelian group generated by $m$ elements each of which has $m$-bounded order. We conclude that the order of $W$ is $m$-bounded.
\end{proof}

Many words are rational. For example, $x^n$ is obviously rational for any positive integer $n$. More generally, it is easy to see that $w^n$ is (weakly) rational whenever $w$ is. It is well-known that the commutator word $w=[x,y]$ is rational (cf \cite{Honda, Isaacs}). On the other hand, by a result of Lubotzky \cite{Lu}, there are many words that are not (weakly) rational.   The main result of Lubotzky shows that
if $G$ is a finite simple group and $X$ is a subset of $G$ closed under automorphisms and containing
$1$, then $X$ is the image of some word $w$ on $G$ (in two variables).   In particular,  if take 
$G=\mathrm{PSL}_2(p)$ with $p$ a prime at least $7$,  there is a word $w$ so that $w(G)$ consists
of the conjugacy class  $C$ of elements of order $(p+1)/2$ together with $1$.  Note that 
if  $x \in C$ and $\langle x \rangle = \langle y \rangle$, then $y \in C$
if and only if $x=y$ or
$x=y^{-1}$.  In particular, the image of $w$ is not rational (since there are generators for 
$\langle x \rangle, x \in C$ other than $x$ and $x^{-1}$).  In particular, $w$ is not weakly rational.
One can construct similar examples for almost all simple groups of Lie type. 

Lubotzky's result also shows that the image of $w$ can be small compared to $|w(G)|$.
For example, we can take $G=\mathrm{Alt}_n$ with $w(G)$ being the set of $3$-cycles together
with $1$.  Such words were also described in \cite{KN}.

Further examples of rational words can be obtained via the following theorem.

\begin{theorem}\label{ra}
Suppose that $w = w(x_1, \ldots, x_r)$ is a word in $r$-variables.  Then $w$
weakly rational or rational implies the same for
$w':=[w,x_0]$.
\end{theorem}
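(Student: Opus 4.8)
The plan is to prove the two implications separately: the rational case by a character‑theoretic formula for the number of solutions of the equation $w'(\dots)=g$, and the weakly rational case by a reduction to a statement about conjugacy of powers inside cyclic subgroups.

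\smallskip
\noindent\emph{The weakly rational case.} I would begin with two reductions. By Lemma~\ref{e1}, $w$ is weakly rational if and only if, for every finite group $G$, the set $G_w$ is a union of rational classes (the rational class of $h$ being $\{(h^{e})^{x}:x\in G,\ (e,|h|)=1\}$; recall $1\in G_w$ and $G_w$ is always conjugation‑invariant). Secondly, $G_{w'}=\{[a,y]:a\in G_w,\ y\in G\}$, so $G_{w'}$ depends only on the \emph{set} $G_w$, and it is conjugation‑invariant because $[a,y]^{z}=[a^{z},y^{z}]$. Hence it suffices to show that $G_{w'}$ is closed under $h\mapsto h^{e}$ for $(e,|G|)=1$. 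Fix $g=[a,y]\in G_{w'}$ with $a\in G_w$ and put $n=|a|$; then $a^{y}=ag$, so $a$ and $ag$ are conjugate, and $a^{f}\in G_w$ for every $f$ coprime to $n$ by weak rationality, so $[a^{f},z]\in G_{w'}$ for all such $f$ and all $z\in G$. Thus everything reduces to the following claim: \emph{if $G$ is finite, $a\in G$ has order $n$, $ag$ is conjugate to $a$, and $(e,|G|)=1$, then $g^{e}$ is conjugate to $[a^{f},z]$ for some $f$ coprime to $n$ and some $z\in G$}. Granting it, $g^{e}\in G_{w'}$ by conjugation‑invariance, which finishes this case.

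\smallskip
\noindent This claim is the heart of the matter, and I expect it to be the main obstacle. One case is transparent: if $g$ commutes with $a$ (in particular if $\langle a\rangle\trianglelefteq\langle a,y\rangle$, so that $g\in\langle a\rangle$), then $(ag)^{e}=a^{e}g^{e}$, hence $[a^{e},y]=a^{-e}(a^{y})^{e}=a^{-e}(ag)^{e}=g^{e}$, and $e$ is coprime to $n$ because it is coprime to $|G|$; so one may take $f=e$, $z=y$. The difficulty is the general case $[a,g]\neq 1$: now $[a^{e},y]=a^{-e}(ag)^{e}$ is merely a product of conjugates of $g$, not $g^{e}$ itself, so $f$ and $z$ must be chosen differently. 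The intended analysis is to work inside $\langle a,g\rangle$, study how conjugation by $a$ and by $y$ acts on the normal closure of $\langle g\rangle$ there, and exploit that $h\mapsto h^{e}$ is a bijection of $G$ commuting with all conjugations; it is this step that carries the weight.

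\smallskip
\noindent\emph{The rational case.} Write $N_{u}(h)$ for the number of tuples with $u$‑value $h$; this is a class function, and $w$ rational means $N_{w}(h)=N_{w}(h^{e})$ whenever $(e,|G|)=1$. Since $[a,x_{0}]=g$ is equivalent to $a^{x_{0}}=ag$, column orthogonality gives $\#\{x_{0}:[a,x_{0}]=g\}=\sum_{\chi}\chi(a)\overline{\chi(ag)}$, so
\[
N_{w'}(g)=\sum_{a}N_{w}(a)\sum_{\chi}\chi(a)\overline{\chi(ag)}.
\]
For a fixed irreducible $\chi$ afforded by $\rho$, using $\overline{\chi(ag)}=\chi(g^{-1}a^{-1})=\operatorname{tr}\bigl(\rho(g^{-1})\rho(a^{-1})\bigr)$ one sees that $\sum_{a}N_{w}(a)\chi(a)\overline{\chi(ag)}=\operatorname{tr}\bigl(\rho(g^{-1})M_{\chi}\bigr)$, where $M_{\chi}=\sum_{a}N_{w}(a)\chi(a)\rho(a^{-1})$; since $N_{w}$ and $\chi$ are class functions $M_{\chi}$ commutes with $\rho(G)$, so $M_{\chi}=\mu_{\chi}I$ with $\mu_{\chi}=\chi(1)^{-1}\sum_{a}N_{w}(a)|\chi(a)|^{2}$, and therefore
\[
N_{w'}(g)=\sum_{\chi}\mu_{\chi}\,\overline{\chi(g)}.
\]
Finally, for the Galois automorphism $\gamma\colon\zeta\mapsto\zeta^{e}$ one has $\chi^{\gamma}(a)=\chi(a^{e})$, so $\mu_{\chi^{\gamma}}=\chi(1)^{-1}\sum_{a}N_{w}(a)|\chi(a^{e})|^{2}=\chi(1)^{-1}\sum_{a}N_{w}(a^{e})|\chi(a^{e})|^{2}=\mu_{\chi}$, the middle equality being rationality of $w$ and the last the substitution $a\mapsto a^{e}$. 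Hence $\mu$ is constant on Galois orbits, and re‑indexing the displayed formula for $N_{w'}$ gives $N_{w'}(g^{e})=N_{w'}(g)$, i.e.\ $w'$ is rational. (Note that $\mu_{\chi}>0$ always, since $1\in G_w$; but positivity alone does not force the \emph{support} of $N_{w'}$ to be Galois‑stable, which is precisely why the weakly rational case needs the separate group‑theoretic argument above.)
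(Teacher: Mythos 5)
Your treatment of the rational case is correct and complete: the identity $N_{w'}(g)=\sum_{\chi}\mu_{\chi}\overline{\chi(g)}$ with $\mu_{\chi}=\chi(1)^{-1}\sum_{a}N_{w}(a)|\chi(a)|^{2}$, combined with the Galois twist $\zeta\mapsto\zeta^{e}$ and the substitution $a\mapsto a^{e}$, is a clean repackaging (via Schur's lemma) of what the paper does with the Frobenius triple-counting formula $N(D,C,C^{-1})=\frac{|C|^2|D|}{|G|}\sum_{\chi}\chi(g^{-1})\chi(b)\chi(b^{-1})/\chi(1)$ summed over the classes $C\subseteq G_w$. The problem is the weakly rational case, where you abandon this machinery and reduce to the group-theoretic claim that if $ag$ is conjugate to $a$ and $(e,|G|)=1$ then $g^{e}$ is conjugate to some $[a^{f},z]$ with $(f,|a|)=1$. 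You prove this only when $[a,g]=1$ and explicitly concede that the general case ``carries the weight''; the sketched plan of analysing the action of $a$ and $y$ on the normal closure of $\langle g\rangle$ inside $\langle a,g\rangle$ is not an argument, so as written the weakly rational half of the theorem is not established. This is a genuine gap, not a cosmetic one: the claim is true, but I see no elementary route to it, and the paper does not attempt one.

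The fix is that your own character computation already covers the weakly rational case --- you only need to feed it a different class function. Replace $N_{w}$ by the indicator function $\iota$ of $G_{w}$ (a class function, since $G_{w}$ is conjugation-invariant), and set $T(g):=\sum_{a}\iota(a)\,\#\{x_{0}:[a,x_{0}]=g\}$. This is a sum of nonnegative terms, so $T(g)\neq 0$ if and only if $g\in G_{w'}$. Your Schur's lemma argument gives $T(g)=\sum_{\chi}\nu_{\chi}\overline{\chi(g)}$ with $\nu_{\chi}=\chi(1)^{-1}\sum_{a}\iota(a)|\chi(a)|^{2}$, and weak rationality (via Lemma~\ref{e1}) says exactly that $\iota(a)=\iota(a^{e})$, so the same Galois-plus-substitution step yields $\nu_{\chi^{\gamma}}=\nu_{\chi}$ and hence $T(g^{e})=T(g)$; thus $g\in G_{w'}$ implies $g^{e}\in G_{w'}$. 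This is precisely the paper's argument in disguise: there one observes that $g\in G_{w'}$ if and only if $N(D,C,C^{-1})\neq 0$ for some conjugacy class $C\subseteq G_{w}$ (with $D=g^{G}$), the Galois automorphism gives $N(D^{e},C^{e},C^{-e})=N(D,C,C^{-1})$, and weak rationality guarantees $C^{e}\subseteq G_{w}$, whence $g^{e}\in G_{w'}$. Incidentally, this also proves your unproved claim with $f=e$, so your instinct about its truth was right; it is only the proposed method of proof that fails.
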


\begin{proof} Let $C$ be a conjugacy class in $G_w$.  If $e$ is prime to $|G|$ and 
$w$ is weakly rational, then $C^e$ is also contained in $G_w$.  
If $w$ is rational, then the number of solutions of $w=g$ is the same as for $g^e$.

Fix $g\in G$ and let $D=g^G$.  Let us count the number of 
triples $N:=N(D,C,C^{-1})$ of $(a,b,c) \in D^{-1} \times C^{-1} \times C$ such that  $abc=1$.  
It is well known that:

$$
N=\frac{|C|^2|D|}{|G|} \sum_{\chi}  \frac{\chi(g^{-1}) \chi(b) \chi(b^{-1})}{\chi(1)},
$$
where the sum is over all irreducible characters $\chi$ and $b$ is some element of $C^{-1}$.

Now apply the Galois automorphism of $\mathbb{Q}[\theta]$ sending $\theta$ to $\theta^e$
for $e$ prime to $|G|$ with $\theta$ a primitive $|G|$th root of $1$.  This shows that
$N=N(D^e,C^e,C^{-e})$.   

In particular,  if $g$ is in the image of $w'$, then $g=[y,x]$ for some 
$y \in C \subset G_w$ and $x\in G$.
This is equivalent to saying that $N(D,C,C^{-1}) \ne 0$,
 whence $N(D^e,C^e,C^{-e}) \ne 0$ and so if $w$ weakly rational, $g^e$ 
is in the image of $w'$.   If $w$ is rational, then we look at 
the equality $N(D^e,C^e,C^{-e})=N(D,C,C^{-1})$ over each conjugacy 
class contained in $G_w$ and conclude that $w'$ is also rational.
\end{proof}

\begin{corollary} Let $n_1,\dots,n_k$ be positive integers and 
$$w=[\dots[x_1^{n_1},x_2]^{n_2},\dots,x_k]^{n_k}.$$ 
If $G$ is a residually finite group in which $w$ has at most $m$ values, 
then the order of $w(G)$ is $m$-bounded.
\end{corollary}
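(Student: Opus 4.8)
The plan is to show that the word $w$ is rational (in fact it would suffice to show it is merely weakly rational) and then invoke Lemma \ref{concise}. Rationality of $w$ will be established by a straightforward induction on $k$, using two ingredients already available: Theorem \ref{ra}, which says that appending a commutator with a fresh variable preserves (weak) rationality, and the elementary observation recorded above that the power $u^n$ of a (weakly) rational word $u$ is again (weakly) rational.

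Concretely, I would set $w_1:=x_1^{n_1}$ and, for $2\le j\le k$, define recursively $w_j:=[w_{j-1},x_j]^{n_j}$, so that $w=w_k$; one should check that this matches the precedence in the displayed word, i.e. that the commutators are left-normed and each power is taken before the next commutator is formed. The base case is immediate, since $x_1^{n_1}$ is rational. For the inductive step, assume $w_{j-1}$ is rational. The variable $x_j$ does not occur in $w_{j-1}$, so $w_{j-1}$ and $x_j$ may be regarded as a word and an extra variable in disjoint letters, and Theorem \ref{ra} applies to give that $[w_{j-1},x_j]$ is rational; raising to the $n_j$-th power then shows $w_j=[w_{j-1},x_j]^{n_j}$ is rational. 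Hence $w=w_k$ is rational, in particular weakly rational. Finally, since $G$ is residually finite and $w$ has at most $m$ values in $G$, Lemma \ref{concise} gives that the order of $w(G)$ is $m$-bounded.

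I do not expect a genuine obstacle here: the proof is essentially bookkeeping. The only points that require a moment of care are that each $x_j$ really is a new variable not appearing in $w_{j-1}$, so that Theorem \ref{ra} is applicable at every stage of the induction, and that the ``power of a rational word is rational'' step is invoked once per level; both are routine. If anything, the subtlest point is simply parsing the displayed word correctly so that the recursion $w_j=[w_{j-1},x_j]^{n_j}$ faithfully reproduces $[\dots[x_1^{n_1},x_2]^{n_2},\dots,x_k]^{n_k}$.
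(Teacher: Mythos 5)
Your proposal is correct and follows exactly the paper's argument: establish rationality of $w$ by induction on $k$, alternating Theorem \ref{ra} with the fact that powers of (weakly) rational words are (weakly) rational, and then apply Lemma \ref{concise}. The paper's own proof is just a more compressed version of the same bookkeeping, so nothing further is needed.
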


\begin{proof} In view of Lemma \ref{concise} it is sufficient to show that the 
word $w$ is weakly rational. Taking this into account that whenever a word $v$ 
is (weakly) rational so is the word $v^n$ and using Theorem \ref{ra}, 
straightforward induction on $k$ shows that the word $w$ is actually rational.
\end{proof}

\end{document}